 \numberwithin{equation}{section}
\theoremstyle{nonumberplain}  
\newtheorem{proof}{Proof} 
\theoremstyle{plain}  
\newtheorem{proposition}{Proposition}[section]
\newtheorem{theorem}[proposition]{Theorem}   
\newtheorem{remark}[proposition]{Remark}
\newtheorem{definition}[proposition]{Definition} 
\newtheorem{notation}[proposition]{Notation} 
\newcommand{\R}{\mathbb{R}}
\newcommand{\Z}{\mathbb{Z}}
\newcommand{\C}{\mathbb{C}}
\newcommand{\dd}{\mathrm{d}}
\newcommand{\tr}{\mathrm{tr}}
\newcommand{\ch}{\mathrm{ch}}
\newcommand{\cc}{\mathbf{c}}
\DeclareMathOperator{\str}{\mathrm{str}}
\renewcommand{\tilde}{\widetilde}
\newcommand{\T}{\mathbb{T}}
\renewcommand{\L}{\text{\normalfont\sffamily L}}
\newcommand{\pf}{\mathrm{pf}}
\newcommand{\Pf}{\text{\normalfont\sffamily Pf}}
\newcommand{\sgn}{\mathrm{sgn}}
\newcommand{\Cl}{\mathrm{Cl} }
\newcommand{\B}{\mathsf{B}}
\DeclareMathOperator{\Str}{\mathrm{Str}}
\title{Construction of the Supersymmetric Path Integral: \\A Survey}
\author{Matthias Ludewig\footnote{The University of Adelaide. matthias.ludewig@adelaide.edu.au}}
\begin{document}

\maketitle

\begin{center}\parbox{12cm}{
{\footnotesize This is a survey based on the joint work \cite{GueneysuLudewig,  HanischLudewig1} with Florian Hanisch and Batu G\"uneysu reporting on a rigorous construction of the supersymmetric path integral associated to compact spin manifolds.}
}\end{center}

\section{Introduction}

A way to understand the geometry of the loop space\footnote{Throughout, we denote $\T = S^1 = \R/\Z$; the smooth loop space is then defined by $\L X = C^\infty(\T, X)$.} $\L X$ of a manifold $X$ is by studying its differential forms. On finite-dimensional manifolds, a key feature of differential forms is that they can be {\em integrated}, which gives a linear functional on the space of differential forms\footnote{We remark that even in the finite-dimensional case, if the manifold is non-compact, then the integration functional will only be defined on a suitable subset of {\em integrable forms}. The same will be true in the infinite-dimensional case.}. Of course, one of the fundamental properties of this integration functional is that it is only non-zero on forms of highest degree; at first glance, this seems to make the task of defining such an integration functional in the infinite-dimensional context of the loop space impossible, as there is no top degree. 

However, if we fix a Riemannian metric on $X$ and define the {\em canonical two-form} on $\L X$ by setting
\begin{equation} \label{CanonicalTwoForm}
  \omega[v, w] := \int_{\T} \bigl\langle v(t), \nabla_{\dot{\gamma}} w(t)\bigr\rangle\, \dd t
\end{equation}
for $\gamma \in \L X$, $v, w \in T_\gamma \L X = C^\infty(\T, \gamma^*TX)$, it turns out that there is a natural way to make sense of the top-degree component of the wedge product $e^\omega \wedge \theta$ for suitable forms $\theta$, by using simple analogies to the finite-dimensional situation (here $e^\omega$ denotes the exponential of $\omega$ in the algebra $\Omega(\L X)$ of differential forms). This {\em top degree component} $[e^\omega \wedge \theta]_{\mathrm{top}}$ should be seen as the pairing of $e^\omega \wedge \theta$ with the volume form corresponding to the $L^2$-metric on $\L X$ (which must remain heuristic as there is no such volume form). Using this notion, an integration functional can then be defined using the Wiener measure.

\medskip

\noindent \textbf{Relation to the Atiyah-Singer index theorem.} There is another side to this story of integrating differential forms on the loop space, which is our main motivation.
More than 30 years ago, it was observed by Atiyah and Witten \cite{MR816738} that there is a very short and conceptual, but formal, i.e.\ non-rigorous, proof of the Atiyah-Singer index theorem using a supersymmetric version of the Feynman path integral. In physics terms, this is the path integral of the $\mathcal{N}=1/2$ supersymmetric $\sigma$-model \cite{AlvarezGaume}. Reformulating the supergeometry appearing in the work of Alvarez-Gaum\'e in the language of differential forms, Atiyah was led to consider the differential form integral
\begin{equation} \label{FormalDefI}
I[\theta] ~ \stackrel{\text{formally}}{=} ~ \int_{\mathsf{L} X} e^{-S + \omega} \wedge \theta
\end{equation}
over the loop space of a Riemannian (spin) manifold $X$, for suitable differential forms $\theta \in \Omega(\mathsf{L} X)$, where
\begin{equation} \label{EnergyAndTwoForm}
  S(\gamma) = \frac{1}{2} \int_{\T} |\dot{\gamma}(t)|^2 \mathrm{d} t
\end{equation}
is the usual energy functional, and $\omega$ is the canonical two-form defined in \eqref{CanonicalTwoForm}. Atiyah proceeds with a series of formal manipulations 
allowing him to rewrite \eqref{FormalDefI} as a Wiener integral. Then, using the Feynman-Kac formula, he identifies this Wiener integral with the supertrace of the heat semigroup associated to the Dirac operator and thus (via the McKean-Singer formula) with the index of the Dirac operator.

On the other hand, the loop space has a natural $\T$-action by rotation of loops, and the differential form $S- \omega$ is closed with respect to the {\em equivariant differential} 
\begin{equation} \label{EquivariantDifferential}
d_K := d + \iota_K,
\end{equation}
where $\iota_K$ denotes insertion of the generating vector field $K(\gamma)= \dot{\gamma}$  of the rotation action. Hence, if the given $\theta$ also satisfies $d_K \theta = 0$, then the composite differential form $e^{-S+\omega} \wedge \theta$ considered above is equivariantly closed as well. Motivated by this observation, Atiyah  formally\footnote{Meaning that one pretends that $\L X$ is finite-dimensional.} applies a Duistermaat-Heckmann type formula \cite{BerlineVergne, DuistermaatHeckmann}, in order to localize the integral to the fixed point set with respect to the rotation action, which is precisely the set of constant loops. Now there is an obvious inclusion map  $i: X \rightarrow \mathsf{L} X$ identifying $X$ with this fixed point set, and one has the {\em localization formula}
\begin{equation} \label{LocalizationFormula}
  I[\theta] ~ \stackrel{\text{formally}}{=} ~ \int_X \widehat{A}(X) \wedge i^*\theta.
\end{equation}
It was later observed by Bismut \cite{MR786574} that this can be used to (formally) prove the twisted Atiyah-Singer index theorem, by considering special differential forms on $\mathsf{L} X$ defined from the data of a vector bundle with connection on $X$, which today are called {\em Bismut-Chern characters}.

\medskip

\noindent \textbf{Our work.} In this survey, we give an account of a recent project \cite{HanischLudewig1, GueneysuLudewig} that carries out a rigorous construction of the supersymmetric path integral map $I$ described above. The map should have the following properties.
\begin{enumerate}
\item[(i)] The map $I$ is defined on some large subset of $\Omega(\L X)$ of {\em integrable forms}, which at least includes the Bismut-Chern characters defined by Bismut \cite{MR786574}.
\item[(ii)] For any integrable differential form $\theta$ with $d_K\theta = 0$, the map $I$ satisfies the localization formula \eqref{LocalizationFormula}.
\end{enumerate}
We remark that in particular, (ii) implies that $I$ is coclosed with respect to $d_K$; in physics language, this means that the path integral is {\em supersymmetric}, where the idea is that the functional is invariant under the odd symmetry generated by $d_K$. Of course, the properties (i)-(ii) do not fix $I$ uniquely, since e.g.\ the functional $I_0(\theta)$, just defined as the right hand side of \eqref{LocalizationFormula} satisfies both requirements tautologically. To obtain a reasonable problem, we therefore add the following rather heuristic requirement.
\begin{enumerate}
\item[(iii)] The map $I$ is given by formula \eqref{FormalDefI} in a suitable sense.
\end{enumerate}
In our work, we construct such a map $I$. Notice that property (ii) follows if $I$ is homologous to the map $I_0$ defined by the right hand side of \eqref{LocalizationFormula}; however, we emphasise that our construction is {\em geometric}. In other words, we construct $I$ as a {\em cochain} rather than an equivalence class in cohomology. 

In fact, we provide two different constructions of the map $I$: In \cite{HanischLudewig1}, a stochastic approach is taken to construct $I$ starting from property (iii); it is not necessarily apparent from this approach, however, that the map constructed that way has property (ii). This is fixed in \cite{GueneysuLudewig}, where we use methods from non-commutative geometry to define a map which $-$ using a fancy version of Getzler rescaling $-$ can be shown to satisfy (ii). The equivalence of these constructions is then established in \cite{HanischLudewig1}.

\medskip

In this survey, we proceed by highlighting the first construction, as described in \cite{HanischLudewig1}; afterwards, we discuss the second construction, as given in \cite{GueneysuLudewig}. In the final section, we connect the two approaches and discuss the localization formula \eqref{LocalizationFormula} and its application to the Bismut-Chern characters.

\section{First Construction: The Top Degree Functional} \label{SectionFirstConstruction}

In this section, we give a quick overview of the construction of the path integral map $I$ portrayed in the introduction, following \cite{HanischLudewig1}. 

\medskip

\noindent \textbf{The Wiener measure.} The construction is essentially based on the Wiener measure $\mathbb{W}$, a certain measure on the continuous loop space\footnote{In fact, the Wiener measure is defined on any space of paths, but here we restrict to the loop space.} $\L_c X = C(\T, X)$ of a Riemannian manifold $X$. The Wiener integral of so-called {\em cylinder functions} is easy to describe. These are functions $F: \L_c X \rightarrow \C$ of the form
\begin{equation} \label{CylinderFunction}
F(\gamma) = f\bigl(\gamma(\tau_1), \dots, \gamma(\tau_N)\bigr)
\end{equation}
for some  $f \in C(M^N)$ and  $0 \leq \tau_1 < \dots < \tau_N < 1$; the formula for their Wiener integral $\mathbb{W}[F]$ is\footnote{In the formula, we adopt the notation $\tau_{N+1} :=1 + \tau_1$ and $x_{N+1} := x_1$.}
\begin{equation} \label{FormulaWienerMeasure}
  \mathbb{W}[F] \stackrel{\text{def}}{=} \int_X \cdots \int_X f(x_1, \dots, x_N) \left(\prod_{j=1}^N p_{\tau_{j+1}-\tau_{j}}(x_{j}, x_{j+1})\right) \dd x_1 \cdots \dd x_N, 
\end{equation}
where $p_t(x, y)$ is the {\em heat kernel} of $X$, i.e.\ the fundamental solution to the heat equation. By the extension theorem and the continuity theorem of Kolmogorov, this determines $\mathbb{W}[F]$ uniquely for all bounded functions $F$ on $\L_c X$. 

For $X = \R^n$, one has the explicit formula
\begin{equation*}
  p_t(x, y) = (2 \pi t)^{-n/2} \exp \left( - \frac{|x-y|^2}{2t} \right)
\end{equation*}
for the heat kernel. After inserting this into \eqref{FormulaWienerMeasure} for $F$ a cylinder function, some elementary manipulations give the result
\begin{equation} \label{FiniteDimApprox}
   \mathbb{W}[F] = \left(\prod_{j=1}^N {\bigl(2 \pi (\tau_j-\tau_{j-1})\bigr)^{-n/2}}\right)\int_{\R^n} \cdots \int_{\R^n} F(\gamma) e^{-S(\gamma)} \dd x,
\end{equation}
where $\gamma = \gamma_x$ is the piecewise linear loop with $\gamma(\tau_j) = x_j$ and as usual, $S$ is the energy functional \eqref{EnergyAndTwoForm}. This formula has an extension to manifolds \cite{MR1698956, MR2482215,  MR3663619}. In fact, formulas like \eqref{FiniteDimApprox} go all the way back to Feynman \cite{FeynmanHibbs}, constituting the starting point for his path integral approach to quantum mechanics. Taking the limit over $N$, \eqref{FiniteDimApprox} leads to the heuristic formula
\begin{equation} \label{HeuristicWiener}
  \mathbb{W}[F] \stackrel{\text{formally}}{=} \frac{1}{C}\int_{\L_c X} F(\gamma) e^{-S(\gamma)} \dd \gamma
\end{equation}
for a suitable constant $C$; in other words, the slogan is that the Wiener measure has the density function $e^{-S}$ with respect to the ``Riemannian volume measure'' $\dd \gamma$ on the loop space $\L_c X$. Of course, there are several well-known problems with this formula that make it remain heuristic, first and foremost the non-existence of the measure $\dd \gamma$ and the infinitude of the constant $C$.

\medskip

\noindent \textbf{Formal definition of the path integral map.}
Ignoring the difference between the smooth and the continuous loop space for the moment, we record that we do not know yet how to integrate differential forms, but at least the Wiener measure enables us to integrate {\em functions} over the loop space. However, if $M$ is an {\em oriented} (for now finite-dimensional) Riemannian manifold, integrating differential forms and functions is essentially the same thing: The two are related by the formula
\begin{equation} \label{RelationBetweenIntegrals}
  \int_M \theta = \int_M [\theta_y]_{\mathrm{top}} \dd y
\end{equation}
for differential forms $\theta$, where the left hand side is to be understood as a differential form integral (determined by the orientation) and the right hand side is the integration map for functions determined by the Riemannian structure. While the latter integration map does not depend on the choice of orientation, the integrand
\begin{equation*}
  [\theta_y]_{\mathrm{top}} \stackrel{\text{def}}{=} \langle \theta_y, \mathrm{vol}_y\rangle,
\end{equation*}
a function on $M$ called the {\em top degree component} of $\theta$, does, as the sign of the volume form $\mathrm{vol}$ depends on the orientation. In supergeometry, this functional is often called {\em Grassmann} or {\em Berezin integral}, after \cite{Berezin}.

The idea is now to apply the observation above to the heuristic formula \eqref{FormalDefI} for the path integral map. Starting from this formula, we obtain the chain of identifications
\begin{equation} \label{RewriteI}
  \int_{\L X} e^{-S + \omega} \wedge \theta \stackrel{\text{formally}}{=} \int_{\L X} [e^\omega \wedge \theta_\gamma]_{\mathrm{top}} \,e^{-S(\gamma)} \dd \gamma \stackrel{\text{formally}}{=} \mathbb{W}\bigl[ [e^\omega \wedge \theta]_{\mathrm{top}}\bigr];
\end{equation}
here in the first step, we formally applied \eqref{RelationBetweenIntegrals} to this infinite-dimensional example, while in the second step, we recognized the right hand side of the heuristic formula \eqref{HeuristicWiener}, for the integrand $F(\gamma) = [e^\omega \wedge \theta_\gamma]_{\mathrm{top}}$.

With a view on the right hand side of \eqref{RewriteI}, the non-trivial task that remains is to provide meaning for the top degree component $[e^\omega \wedge \theta]_{\mathrm{top}}$ of the differential form $e^\omega \wedge \theta$ as a $\mathbb{W}$-integrable function on $\L_c X$; this is the main achievement of the paper \cite{HanischLudewig1}; we outline the construction below. 

\begin{remark}
The formal manipulations conducted in \eqref{RewriteI} are more or less well-known. However, in the literature, the differential form $\theta$ is either constant equal to one (see \cite{MR816738}) or taken to be a Bismut-Chern character (see \cite{MR786574}). In both cases, the top degree component $[e^\omega \wedge \theta]_{\mathrm{top}}$ can be defined (and computed) using {\em ad hoc} methods. The novelty of the approach taken in our paper \cite{HanischLudewig1} is that we allow a very general class of differential forms $\theta$ to be plugged into our top degree functional, in order to obtain a general definition of the path integral.
\end{remark}

\medskip

\noindent \textbf{Definition of the top degree functional.}  To explain the definition of our top degree functional, notice that the canonical two-form $\omega$ has the form $\omega[v, w] = \langle v, A w\rangle_{L^2}$ in terms of the $L^2$ scalar product, where $A = \nabla_{\dot{\gamma}}$, a skew-adjoint operator on $C^\infty(\T, \gamma^* TX)$. Now if $V$ is an arbitrary finite-dimensional, oriented Euclidean vector space and $\omega \in \Lambda^2 V^\prime$ has the form $\omega[v, w] = \langle v, A w\rangle_{V}$ for an invertible skew-adjoint operator $A$ on $V$, one has the result
\begin{equation} \label{FinDimAnalogy}
 [e^\omega \wedge \vartheta_1 \wedge \cdots \wedge \vartheta_N]_{\mathrm{top}} = \pf(A) \,\pf \Bigl( \langle \vartheta_a, A^{-1} \vartheta_b\rangle_V \Bigr)_{1 \leq a, b \leq N},
\end{equation}
for $\vartheta_1, \dots, \vartheta_N \in V^\prime$, where $\pf$ stands for the {\em Pfaffian} of a skew-symmetric matrix, c.f.\ \cite[Prop.~1]{Lott}.
In the case that $A$ is not invertible, there is a similar, slightly more complicated formula, for details, c.f.\ \cite{HanischLudewig1}. This allows to define the top degree functional on the infinite-dimensional Euclidean vector space $V = C^\infty(\T, \gamma^*TX)$ by analogy: In case that $A = \nabla_{\dot{\gamma}}$ is invertible, we can set
\begin{equation} \label{DefTopDegree}
  q(\theta_1 \wedge \cdots \wedge \theta_N) \stackrel{\mathrm{def}}{=} \pf_\zeta(\nabla_{\dot{\gamma}}) \pf\Bigl( \langle \theta_a, \nabla_{\dot{\gamma}}^{-1} \theta_b\rangle_V \Bigr)_{1 \leq a, b \leq N}
\end{equation}
for $\theta_1, \dots, \theta_N \in C^\infty(\T, \gamma^*T^\prime X)$ and if $\nabla_{\dot{\gamma}}$ is not invertible, it is invertible on the orthogonal complement of its (always finite-dimensional) kernel, which allows to employ the generalization of the formula \eqref{FinDimAnalogy} mentioned above. Hence heuristically, $q(\theta)$ is the ``top degree component'' of the differential form $e^\omega \wedge \theta$. 

In \eqref{DefTopDegree}, $\pf_\zeta(\nabla_{\dot{\gamma}})$ denotes the zeta-regularized Pfaffian of $\nabla_{\dot{\gamma}}$, a square root of its zeta-regularized determinant. This quantity is not a number but rather an element of the {\em Pfaffian line} $\Pf_\gamma$, a certain one-dimensional real vector space canonically associated to $\gamma$; this reflects the fact that there is no na\"ive concept of orientation on the infinite-dimensional vector space $V$. 
These Pfaffian lines glue together to the so-called {\em  Pfaffian line bundle} $\Pf$ on $\L X$, which is related to the spin condition: By the work of Stolz-Teichner and Waldorf \cite{StolzTeichner, MR3493404}, a spin structure  on $X$ gives an orientation of the loop space $\L X$, in the sense that it provides a canonical trivialization of the Pfaffian line bundle and turns the top degree component \eqref{DefTopDegree} into an honest number. This is the reason why the spin condition is important to define our path integral.

\begin{remark} 
This is analogous to the fact that on a finite-dimensional {\em non-oriented} manifold, the top degree component is also a section of a real line bundle, the {\em orientation bundle}, which is trivialized by an orientation.
\end{remark}

The main result of our paper \cite{HanischLudewig1} is then the following formula, which provides a way to actually compute its value and the value of the integral map.

\begin{theorem} \label{ThmMainIdentification}
Suppose that $X$ is a spin manifold with spinor bundle $\Sigma$. Then the top-degree component defined above is canonically a number, and it is given by the formula
\begin{equation} \label{FormulaFromThm}
    q(\theta_N \wedge \cdots \wedge \theta_1) = 2^{-N/2}  \sum_{\sigma \in S_N} \mathrm{sgn}(\sigma) \int_{\Delta_N} \str \left([\gamma\|_{\tau_N}^1]^\Sigma \prod_{a=1}^N \cc\bigl(\theta_{\sigma_a}(\tau_a)\bigr) [\gamma\|_{\tau_{a-1}}^{\tau_a}]^\Sigma\right)\dd \tau.
\end{equation}
Here $\Delta_N = \{ 0 \leq \tau_1 \leq \dots \leq \tau_N \leq 1 \}$ is the standard simplex, $[\gamma\|_{\tau_{a-1}}^{\tau_a}]^\Sigma$ denotes parallel translation in the spinor bundle along the loop $\gamma$ and $\cc$ denotes Clifford multiplication. Moreover, $\str$ is the supertrace of the spinor bundle\footnote{Throughout, we take the {\em real spinor bundle}, a bundle of irreducible $\Cl(TX)$-$\Cl_n$-bimodules. In any dimension, the space $\mathrm{End}_{\Cl_n}(\Sigma)$ of endomorphisms of $\Sigma$ commuting with the right $\Cl_n$-action carries a canonical supertrace.}.
\end{theorem}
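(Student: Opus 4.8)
\section*{Proof proposal}

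The plan is to compute both factors in the definition \eqref{DefTopDegree} explicitly and to recognize their product as the right-hand side of \eqref{FormulaFromThm}. The starting point is to make the operator $A = \nabla_{\dot{\gamma}}$ on $V = \Cinf(\T, \gamma^*TX)$ concrete. First I would choose a parallel orthonormal frame along $\gamma$, which identifies $V$ with the space of maps $f \colon \R \to \R^n$ satisfying the twisted periodicity $f(t+1) = H^{-1}f(t)$, where $H \in \mathrm{SO}(n)$ is the holonomy of $\gamma$; under this identification $A$ becomes the plain derivative $\dd/\dd t$. From here the Green's function of $A$ $-$ the integral kernel of $A^{-1}$ $-$ can be written explicitly in terms of the parallel transport operators $[\gamma\|_s^t]$ along the loop, essentially as a step function weighted by parallel transport and corrected by the holonomy so as to respect the boundary conditions.

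With this Green's function in hand, the next step is purely algebraic. Inserting it into the matrix entries $M_{ab} = \< \theta_a, A^{-1}\theta_b \>_V$ and expanding the finite-dimensional Pfaffian $\pf(M)$ as a sum over perfect matchings, I would reorganize the result as a sum over permutations $\sigma \in S_N$ together with an iterated integral over the standard simplex $\Delta_N$. The ordering of the integration variables in $\Delta_N$ is exactly what is dictated by the step-function structure of the Green's function, so the nested parallel transports $[\gamma\|_{\tau_{a-1}}^{\tau_a}]$ appear naturally in the correct order; the signs $\sgn(\sigma)$ come from reordering the one-forms inside the Pfaffian expansion.

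The analytically substantive step is the computation of the zeta-regularized Pfaffian $\pf_\zeta(A)$ and its identification as a genuine number via the spin structure. Since the eigenvalues of $A = \dd/\dd t$ on twisted-periodic functions are governed by the eigenvalues of $H$, the regularized determinant $\det_\zeta(A)$ is a classical spectral computation producing an expression built from the holonomy $H$. Its square root lives a priori in the Pfaffian line $\Pf_\gamma$; here I would invoke the Stolz--Teichner--Waldorf trivialization coming from the spin structure to pin down $\pf_\zeta(A)$ as an honest number and to identify it, up to normalization, with the spinor holonomy $[\gamma\|]^\Sigma$. This passage from the orthogonal holonomy on $TX$ to its lift in the spinor bundle $\Sigma$ is the heart of the argument.

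Finally I would combine the two computations. Rather than a clean factorization, the two Pfaffians get woven together inside a single supertrace via the correspondence between the Berezin integral on $\Lambda(\gamma^*TX)$ and the supertrace over the Clifford module $\Sigma$: the regularized Pfaffian contributes the spinor holonomy, each Green's-function factor contributes an ordered spinor parallel transport $[\gamma\|_{\tau_{a-1}}^{\tau_a}]^\Sigma$, and each one-form value $\theta_{\sigma_a}(\tau_a)$ is realized as a Clifford multiplication $\cc(\theta_{\sigma_a}(\tau_a))$. The supertrace then plays the role of the top-degree component $[\,\cdot\,]_\top$, picking out the volume contribution exactly as in the finite-dimensional identity \eqref{FinDimAnalogy}. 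Matching signs, normalization, and the simplex ordering yields \eqref{FormulaFromThm}. I expect the main obstacle to be the zeta-regularization step: controlling the regularized infinite product of eigenvalues and, above all, correctly matching the resulting element of the Pfaffian line against the canonical spin trivialization, so that the normalizing constant $2^{-N/2}$ comes out right.
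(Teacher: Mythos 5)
A preliminary remark: this survey states Theorem~\ref{ThmMainIdentification} without proof, citing \cite{HanischLudewig1}, so your proposal can only be compared with the proof given there and with the construction sketched around \eqref{DefTopDegree}. Measured against that, your outline follows essentially the same (and the correct) strategy: trivialize $\gamma^*TX$ by a parallel orthonormal frame so that $\nabla_{\dot\gamma}$ becomes $\dd/\dd t$ with holonomy-twisted periodicity; write its Green's function as a step function weighted by parallel transport and corrected by the holonomy; expand the finite-dimensional Pfaffian $\pf\bigl(\langle\theta_a, \nabla_{\dot\gamma}^{-1}\theta_b\rangle\bigr)$ over perfect matchings, letting the step functions decompose $\T^N$ into the simplices that produce $\int_{\Delta_N}$ and the signs $\sgn(\sigma)$; compute $\pf_\zeta(\nabla_{\dot\gamma})$ spectrally in terms of the holonomy; and use the spin trivialization of $\Pf_\gamma$ to identify the result with the supertrace of the spinor holonomy. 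The final ``weaving'' step you describe is, concretely, a fermionic Wick-type lemma expressing $\str\bigl([\gamma\|_0^1]^\Sigma\prod_a \cc(v_a)\bigr)$ as $\str\bigl([\gamma\|_0^1]^\Sigma\bigr)$ times a Pfaffian of two-point functions; matching that covariance matrix with the integrated Green's function is precisely where the normalization $2^{-N/2}$ must come from. Your proposal correctly isolates this as the crux, although it invokes it only as a ``correspondence'' rather than stating and proving it.

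The genuine gap is that your entire plan is conditional on $\nabla_{\dot\gamma}$ being invertible, i.e.\ on the holonomy of $\gamma$ having no eigenvalue $1$, and this is not a harmless genericity assumption. Every element of $\mathrm{SO}(n)$ with $n$ odd fixes a nonzero vector, so for odd-dimensional $X$ the operator $\nabla_{\dot\gamma}$ has nontrivial kernel for \emph{every} loop $\gamma$; the same holds for every constant loop in any dimension. The theorem is a pointwise identity valid for all $\gamma$ and all dimensions (the use of the real spinor bundle and the $\Cl_n$-linear supertrace is exactly what makes the statement nontrivial in odd dimensions), so as written your argument is vacuous when $\dim X$ is odd. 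A complete proof must use the degenerate generalization of \eqref{FinDimAnalogy} alluded to after \eqref{DefTopDegree}: restrict to $\ker(\nabla_{\dot\gamma})^\perp$, take the zeta-regularized Pfaffian there, contract the finitely many kernel modes into the forms $\theta_a$, and establish the Wick-type identity in the presence of these zero modes (the Green's function then also acquires a projection correction). Handling the zero modes consistently with the supertrace formula is an essential part of the argument in \cite{HanischLudewig1}, not a corner case, and it is entirely missing from your proposal.
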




\noindent \textbf{Rigorous definition of the path integral map.} At this point, the top degree map assigns to a certain class of differential forms $\theta$ on the loop space $\L X$ of a spin manifold $X$ the smooth function $q(\theta)$ on $\L X$, to be interpreted as the ``top degree component'' of $e^\omega \wedge \theta$. The problem now is that we need $q(\theta)$ to be a function on the {\em continuous} loop space $\L_c X$, in order to be able to integrate with respect to the Wiener measure, as in \eqref{RewriteI}. 

One problem here when looking at formula \eqref{FormulaFromThm} is that a loop has to be absolutely continuous in order to define the parallel transport along it. A solution to this problem is provided by the notion of {\em stochastic parallel transport}: As ultimately, $I$ is defined by Wiener integration, it suffices to have $q(\theta)$ defined as a measurable function only (with respect to the Wiener measure $\mathbb{W}$). This is achieved by interpreting the occurrences of the parallel transport in  \eqref{FormulaFromThm} in the stochastic sense, which provides a stochastic extension $\tilde{q}$ of the function $q$; for details on the stochastic parallel transport, see e.g.\  \cite{Emery, GueneysuVectorValuedFK, HackThal}.

To discuss the possible integrands $\theta$, notice that since $\L_c X \subset \L X$ is dense, we can consider $\Omega(\L_c X)$ as a subspace of $\Omega(\L X)$. 

\begin{notation} \label{NotationD}
Denote my $\mathscr{D} \subseteq \Omega(\L_c X) \subset \Omega(\L X)$ the space of differential forms $\theta$ that are wedge products of one forms that are uniformly bounded.
\end{notation}

For elements $\theta \in \mathscr{D}$, the function $\tilde{q}(\theta)$ is a well-defined measurable function on $\L_c X$. Since it is also bounded by boundedness of $\theta$, it is moreover integrable, and we define $I: \mathscr{D} \rightarrow \R$ by the formula
\begin{equation} \label{DefinitionI}
  I[\theta] ~ \stackrel{\text{def}}{=} ~ \mathbb{W}\left[ \tilde{q}(\theta) \exp\left(-\frac{1}{8} \int_{\T} \mathrm{scal}\bigl(\gamma(\tau)\bigr) \mathrm{d} \tau\right)\right].
\end{equation}
The main difference of this definition to the formal version \eqref{RewriteI} is the appearance of the exponential including the scalar curvature. While this may seem strange at glance, this is an important ``quantum correction'' to the definition, c.f.\ Remark~\ref{RemarkScalarCurvature} below. 

\medskip

\noindent \textbf{Examples.} We now give some examples of differential forms that are contained in $\mathscr{D}$, together with their $I$-integrals. We assume $X$ to be a compact spin manifold in this section. Given a differential form $\vartheta \in \Omega^\ell(X)$ and $\tau \in \T$, we can produce a differential $\ell$-form $\vartheta(\tau) \in \Omega^\ell(\L X)$ by setting
\begin{equation} \label{TrivialLift}
  \vartheta(\tau)_\gamma[v_1, \dots, v_\ell] \stackrel{\text{def}}{=} \vartheta_{\gamma(\tau)}\bigl[v_1(\tau), \dots, v_\ell(\tau)\bigr]
\end{equation}
for $v_1, \dots, v_\ell \in T_\gamma \L X$. Moreover, for any function $\varphi \in C^\infty(\T)$, we can construct another differential form $\overline{\vartheta} \in \Omega^\ell(\L X)$ by setting
\begin{equation} \label{AdmissibleFormExample}
\overline{\vartheta} = \int_{\T} \vartheta(\tau) \dd \tau \qquad \text{or}\qquad \overline{\vartheta}_\gamma[v_1, \dots, v_\ell] = \int_{\T} \vartheta_{\gamma(\tau)}\bigl[v_1(\tau), \dots, v_\ell(\tau)\bigr] \dd \tau.
\end{equation}
If $\vartheta \in \Omega^1(X)$, then $\overline{\vartheta} \in \Omega^1(\L X)$ as defined in \eqref{AdmissibleFormExample} satisfies the assumptions (i) and (ii) of Notation~\ref{NotationD}, hence sums of wedge products of forms of this type are contained in the domain $\mathscr{D}$.
 
On these forms, the integral map is given as follows: Given $\vartheta_a \in \Omega^{1}(X)$, $a = 1, \dots, N$, and correspondingly $\overline{\vartheta}_a \in \Omega^1(\L X)$ defined by \eqref{AdmissibleFormExample}, then $\overline{\vartheta}_1 \wedge \cdots \wedge \overline{\vartheta}_N \in \mathscr{D}$, and the corresponding integral is given by the combinatoric formula
\begin{equation} \label{IntegralMapCylinderForms}
\begin{aligned}
I\bigl[\overline{\vartheta}_1 \wedge \cdots \wedge \overline{\vartheta}_N\bigr] = 2^{-N/2} \sum_{\sigma \in S_N} \mathrm{sgn}(\sigma) \int_{\Delta_N}  \Str \Bigl(e^{-\tau_1 H}  \prod_{a=1}^N \cc(\vartheta_{\sigma_a}) e^{-(\tau_a-\tau_{a-1})H} \Bigr)\dd \tau
\end{aligned}
\end{equation}
where $H = \mathsf{D}^2/2$, with $\mathsf{D}$ the Dirac operator. This follows from the explicit formula \eqref{FormulaFromThm}; the Wiener integral in \eqref{DefinitionI} is then evaluated using a vector-valued Feynman-Kac formula, see e.g.\ \cite{GueneysuVectorValuedFK}.

\begin{remark} \label{RemarkScalarCurvature}
The scalar curvature factor of \eqref{DefinitionI} is needed because of the Lichnerowicz formula; without it, formula \eqref{IntegralMapCylinderForms} would feature the operator $H - \mathrm{scal}/8$ instead of $H$, which has no good cohomological properties: It turns out that the scalar curvature term is necessary in order to make the functional coclosed (or, in physics lingo: to make the path integral supersymmetric).
\end{remark}



\section{Second Construction: The Chern Character} \label{SectionSecondConstruction}

The above construction of the integral map was achieved by a na\"ive reformulation \eqref{RewriteI} of the heuristic path integral formula \eqref{FormalDefI}. Its disadvantage is that the domain $\mathscr{D} \subset \Omega(\L X)$ where it is defined is quite small;  for example it does {\em not} contain the Bismut-Chern characters considered below, which are the most interesting integrands due to their r\^ole played in relation to the index theorem. We therefore now give a different construction of a path integral map, which has a much larger domain of definition and turns out to extend the previous one. A complete account can be found in the paper \cite{GueneysuLudewig}. 

\medskip

\noindent \textbf{The bar construction.} To set things up, we have to introduce the following algebraic machinery: The {\em bar complex} associated to a differential graded algebra $\Omega$ is the graded vector space\footnote{Here $\Omega(X)[1]$ equals $\Omega(X)$ as a vector space, but with degrees shifted by one: We have $\vartheta \in \Omega^{k+1}$ if and only if $\vartheta \in \Omega[1]^{k}$.}
\begin{equation} \label{BarComplex}
  \mathsf{B}\bigl(\Omega\bigr) = \bigoplus_{N=0}^\infty \Omega[1]^{\otimes N}.
\end{equation}
The elements of $\B(\Omega)$ are called bar chains and denoted by $(\vartheta_1, \dots, \vartheta_N)$ for $\vartheta_a \in \Omega$, suppressing the tensor product sign in notation for convenience. $\B(\Omega)$ has a distinguished subspace $\B^\natural(\Omega)$, which consists of those elements of $\B(\Omega)$ that are invariant under graded cyclic permutation of the tensor factors. $\B(\Omega)$ has two differentials, a differential $d$ coming from the differential of $\Omega$ and the {\em bar differential} $b^\prime$; they are given by
\begin{equation*}
\begin{aligned}
d(\vartheta_1, \dots, \vartheta_N) &= \sum_{k=1}^N (-1)^{n_{k-1}}({\vartheta}_1, \dots, {\vartheta}_{k-1}, d \vartheta_k, \dots, \vartheta_N)\\
b^\prime(\vartheta_1, \dots, \vartheta_N) &= -\sum_{k=1}^{N-1} (-1)^{n_k}({\vartheta}_1, \dots, {\vartheta}_{k-1}, {\vartheta}_{k}\vartheta_{k+1}, \vartheta_{k+2}, \dots, \vartheta_N)
\end{aligned}
\end{equation*}
where $n_k = |\vartheta_1|+\dots + |\vartheta_k|-k$. The above differentials satisfy $db^\prime + b^\prime d = 0$, hence turn $\B(\Omega)$ and $\B^\natural(\Omega)$ into bicomplexes with total differential $d + b^\prime$. Dually, we have the {\em codifferential}
\begin{equation} \label{Codifferential}
  (\delta \ell)[\vartheta_1, \dots, \vartheta_N] \stackrel{\text{def}}{=} - \ell\bigl[(d+b^\prime)(\vartheta_1, \dots, \vartheta_N)\bigr].
\end{equation}
on the space of linear maps $\ell: \B(\Omega) \rightarrow \C$.


\medskip

\noindent \textbf{The iterated integral map.} Remember the definition \eqref{TrivialLift} of cylinder forms above. {\em Chen's iterated integral map} \cite{Chen1} also constructs differential forms on the loop space from differential forms on $X$, this time taking as input elements of the bar complex $\B(\Omega(X))$. For our purposes, we need an extension of this, introduced by Getzler, Jones and Petrack \cite{GJP}. We consider the differential graded algebra
\begin{equation*}
  \Omega_{\T}(X) \stackrel{\text{def}}{=} \Omega(X \times \T)^{\T},
\end{equation*}
the space of differential forms on $X \times \T$ which are constant in the $\T$-direction. Elements $\vartheta \in \Omega_\T(X)$ will always be decomposed into $\vartheta = \vartheta^\prime + dt \wedge \vartheta^{\prime\prime}$, where $\vartheta^\prime, \vartheta^{\prime\prime} \in \Omega(X)$. The differential of $\Omega_\T(X)$ is $d_\T := d- \iota_{\partial_t}$, where $\iota_{\partial_t}$ denotes insertion of the canonical vector field $\partial_t$ on the $\T$ factor and $d$ denotes the de-Rham differential on $X \times \T$. In other words, we have
\begin{equation*}
  d_\T\vartheta = d_\T(\vartheta^\prime + dt \wedge \vartheta^{\prime\prime}) = d \vartheta^\prime - dt \wedge d\vartheta^{\prime\prime} - \vartheta^{\prime\prime},
\end{equation*} 
where now on the right hand side, $d$ denotes the de-Rham differential on $X$.
The version of the {\em extended iterated integral map} used in this survey is a map taking $\B(\Omega_\T(X))$ to $\Omega(\L X)$; it is defined by the formula
\begin{equation} \label{IteratedIntegralMap}
 \rho(\vartheta_1, \dots, \vartheta_N) = \int_{\Delta_N} \bigl(\iota_K \vartheta_1^\prime(\tau_1) + \vartheta_1^{\prime\prime}(\tau_1)\bigr) \wedge \cdots \wedge \bigl(\iota_K \vartheta_N^\prime(\tau_N) + \vartheta_N^{\prime\prime}(\tau_N)\bigr) \dd \tau\end{equation}
for $\vartheta_1, \dots, \vartheta_N \in \Omega_\T(\L X)$, where we recall that $K(\gamma) = \dot{\gamma}$ is the canonical velocity vector field.
This allows to produce many examples of differential forms on the loop space. 

The crucial fact about $\rho$ is that its restriction $\rho^\natural$ to cyclic chains
\begin{equation*}
  \rho^\natural: \B\bigl(\Omega_\T(X)\bigr) \supset \B^\natural \bigl(\Omega_\T(X)\bigr) \longrightarrow \Omega(\L X)^{\T} \subset \Omega(\L X)
\end{equation*}
is a {\em chain map} in the sense that $\rho^\natural$ sends the total differential $d_\T+b^\prime$ to the equivariant differential $d_K$ (defined in \eqref{EquivariantDifferential}). Moreover, notice that the degree shift in the definition of $\B(\Omega_\T(X))$ ensures that $\rho$ is in fact degree-preserving.

\medskip

\noindent \textbf{The Chern character.} Let now $X$ be a compact spin manifold. Our second construction of the path integral map $I$ is based on the construction of a closed cochain
\begin{equation*}
  \mathrm{Ch}_{\mathsf{D}}: \B^\natural\bigl(\Omega_\T(X)\bigr) \longrightarrow \R,
\end{equation*}
called the {\em Chern character} in \cite{GueneysuLudewig}. It has the property that it vanishes on the kernel $\ker(\rho)$ of the iterated integral map \eqref{IteratedIntegralMap}, hence $\mathrm{Ch}_{\mathsf{D}}$ can be pushed forward to a functional on the image of the iterated integral map inside $\Omega(\L X)$.

To define $\mathrm{Ch}_{\mathsf{D}}$, we define a cochain $F$ on $\B(\Omega_\T(X))$ with values in the algebra of linear operators on $L^2(X, \Sigma)$. Explicitly, $F$ is given on homogeneous elements by the formula
\begin{equation*}
\begin{aligned}
  F[\vartheta] &\stackrel{\text{def}}{=} \cc(\vartheta^{\prime\prime}) + [\mathsf{D}, \cc(\vartheta^\prime)]- \cc(d \vartheta^\prime), \\
  F[\vartheta_1, \vartheta_2] &\stackrel{\text{def}}{=} (-1)^{|\vartheta_1^\prime|}\bigl(\cc({\vartheta}_1^\prime)\cc({\vartheta}_2^\prime) - \cc({\vartheta}_1^\prime \wedge {\vartheta}_2^\prime) \bigr),
  \end{aligned}
\end{equation*}
where $\mathsf{D}$ is the Dirac operator;
moreover, we set $F[\vartheta_1, \dots, \vartheta_k] = 0$ whenever $k \geq 3$.
The formula for the Chern character is now
\begin{equation} \label{ChernCharacter}
\begin{aligned}
  \mathrm{Ch}_{\mathsf{D}}[\vartheta_1, \dots, \vartheta_N] = 2^{-n_N/2} \sum_{s \in \mathscr{P}_{N}} \int_{\Delta_M} \mathrm{Str}\Bigl(&e^{-\tau_1 H} \prod_{a=1}^M F[\vartheta_{s_{a-1}+1}, \dots \vartheta_{s_a}] e^{-(\tau_a - \tau_{a-1})H} \Bigr)\dd \tau.
\end{aligned}
\end{equation}
Here $\mathscr{P}_N$ denotes the set of all partitions of $\{1, \dots, N\}$, given by a sequence of numbers $s = \{0 = s_0 < s_1 < \dots < s_M = N\}$. In particular, as $F$ vanishes when one inputs more than two elements, a summand corresponding to a partition $s$ is zero as soon as there exists an index $a$ with $s_a - s_{a-1} \geq 3$.

\begin{remark}
The name Chern character stems from the fact that $\mathrm{Ch}_{\mathsf{D}}$ can be interpreted as the version of a Chern character in non-commutative geometry, namely that of a Fredholm module given by the Dirac operator on $X$. For details, see \cite{GueneysuLudewig}.
\end{remark}

\medskip

\noindent \textbf{Properties of the Chern character.}
One of the advantages of the second approach to the supersymmetric path integral map is that due to the algebraic character of the construction, it is easier to investigate its properties. As mentioned above, one of the results is that $\mathrm{Ch}_{\mathsf{D}}$ is {\em Chen normalized} \cite[Thm.~5.5]{GueneysuLudewig}, meaning that it vanishes on the kernel $\ker(\rho^\natural)$ of the iterated integral map, restricted to cyclic chains. This means that we can define its push-forward
\begin{equation*}
  I^\prime: \Omega(\L X) \supset \mathrm{im}(\rho^\natural) \longrightarrow \R, \qquad I^\prime[\theta] = \mathrm{Ch}_{\mathsf{D}}\bigl[\rho^\natural(\vartheta_1, \dots, \vartheta_N)\bigr]
\end{equation*}
if $\theta = \rho^\natural(\vartheta_1, \dots, \vartheta_N)$; notice that this is well-defined as $\mathrm{Ch}_{\mathsf{D}}$ is Chen normalized. This gives a second functional on the space of differential forms on the loop space, with domain $\mathrm{im}(\rho^\natural)$. One of the main features of the construction is the fact that $\mathrm{Ch}_{\mathsf{D}}$ is coclosed, meaning that
\begin{equation} \label{ChClosedness}
 \delta \mathrm{Ch}_{\mathsf{D}}= 0,
\end{equation}
where $\delta$ is the codifferential \eqref{Codifferential}; c.f.\ \cite[Thms~4.2, 5.3]{GueneysuLudewig}. By the compatibility of $\rho^\natural$ with respect to the differentials, this implies that $I^\prime$ is coclosed with respect to the equivariant differential $d_K$. In other words, for any differential form $\theta \in \mathrm{im}(\rho^\natural)$, we have the following version {\em Stokes' theorem}
\begin{equation*}
  I^\prime[d_K \theta] = 0,
\end{equation*}
stating that exact forms have vanishing integral. In physics slang, this is the {\em supersymmetry} of the path integral.

However, much more is true. The operator-theoretic formula \eqref{ChernCharacter} for $\mathrm{Ch}_{\mathsf{D}}$ makes it accessible to Getzler's rescaling technique; a souped up version of this machinery then enables to show the following \cite[Thm.~9.1]{GueneysuLudewig}.

\begin{theorem} \label{ThmChCohomologous}
$\mathrm{Ch}_{\mathsf{D}}$ is cohomologous, as a Chen normalized cochain on $\B^\natural(\Omega_\T(X))$, to the Chen normalized cochain $\mu_0$, defined by
\begin{equation} \label{DefinitionCh0}
  \mu_0(X)[\vartheta_1, \dots, \vartheta_N] \stackrel{\mathrm{def}}{=} \frac{1}{(2 \pi)^{n/2} N!}\int_X \hat{A}(X) \wedge \vartheta_1^{\prime\prime} \wedge \cdots \wedge \vartheta_N^{\prime\prime},
\end{equation}
where $\hat{A}(X)$ is the Chern-Weil representative of the $\hat{A}$-genus of $X$.
\end{theorem}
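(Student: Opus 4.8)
The plan is to read Theorem~\ref{ThmChCohomologous} as a \emph{local index theorem at the level of cochains} and to prove it by a rescaling argument in the spirit of Getzler's proof of the Atiyah--Singer theorem, adapted to the fact that $\mathrm{Ch}_{\mathsf{D}}$ is a whole cochain (fibered over the simplices $\Delta_M$, with several operator insertions) rather than a single supertrace. Concretely, I would introduce the one-parameter family of cochains $\mathrm{Ch}_{u\mathsf{D}}$, $u \ge 1$, obtained by replacing $\mathsf{D}$ by $u\mathsf{D}$ (equivalently, rescaling the metric) throughout formula~\eqref{ChernCharacter}, so that $H$ becomes $H_u = u^2\mathsf{D}^2/2$ and $F$ becomes the corresponding $F_u$. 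The proof then splits into two independent pieces: (a) showing that all members of this family represent the same class, i.e.\ $\mathrm{Ch}_{\mathsf{D}}$ and $\mathrm{Ch}_{u\mathsf{D}}$ are cohomologous for every $u$; and (b) identifying the limit $\lim_{u\to\infty}\mathrm{Ch}_{u\mathsf{D}} = \mu_0$. Combining (a) and (b) yields $\mathrm{Ch}_{\mathsf{D}} - \mu_0 = \delta\psi$ for the homotopy $\psi$ produced in (a), which is the assertion.

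For step (a) I would establish a \emph{transgression formula}. Differentiating $\mathrm{Ch}_{u\mathsf{D}}$ in $u$ and applying Duhamel's principle to each factor $e^{-(\tau_a-\tau_{a-1})H_u}$ produces, after reorganizing the simplex integrals, an expression of the form $\frac{d}{du}\mathrm{Ch}_{u\mathsf{D}} = \delta\psi_u$, where $\psi_u$ is an explicit cochain obtained by inserting the operator $\dot H_u = u\mathsf{D}^2$ at one extra slot; the coclosedness $\delta\mathrm{Ch}_{u\mathsf{D}} = 0$ (which holds for every $u$, being a structural property of the construction) is exactly what makes the non-coboundary terms cancel. Integrating in $u$ then gives $\mathrm{Ch}_{\mathsf{D}} - \mathrm{Ch}_{u\mathsf{D}} = \delta\!\left(-\int_1^u \psi_s\,ds\right)$. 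Here one must check two things: that $\psi_s$ is again \emph{Chen normalized} (so that the homotopy lives in the complex $\B^\natural$, not just $\B$), which should follow from the cyclic symmetry used to derive the transgression, and that $\int_1^\infty \psi_s\,ds$ converges in the appropriate (entire) sense, so that one may actually let $u\to\infty$.

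For step (b) I would invoke Getzler rescaling as the analytic device computing the limit. Working in normal coordinates around a point and rescaling the coordinate, time, and Clifford variables in the usual coupled way, the Clifford algebra degenerates to the exterior algebra, the supertrace $\Str$ becomes the Berezin integral extracting the top exterior degree, and the rescaled heat semigroups built from $H_u = u^2\mathsf{D}^2/2$ converge, via the Lichnerowicz formula, to the harmonic-oscillator (Mehler) kernel whose supertrace is precisely the Chern--Weil representative $\hat{A}(X)$. The key structural point is the behaviour of the insertions $F_u[\vartheta]$: the terms $[u\mathsf{D},\cc(\vartheta')] - \cc(d\vartheta')$ have subleading Getzler symbol and drop out in the limit, while $\cc(\vartheta'')$ rescales to exterior multiplication $\vartheta'' \wedge(\,\cdot\,)$. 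This is the reason only the $\vartheta_a''$-components survive and appear wedged together in~\eqref{DefinitionCh0}; the remaining $\int_{\Delta_M}d\tau$ collapses (the integrand becoming $\tau$-independent in the limit) to the volume factor $1/N!$, and the Gaussian normalization of the Mehler kernel produces the constant $(2\pi)^{-n/2}$.

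I expect the main obstacle to be the analytic control required to make steps (a) and (b) rigorous \emph{for the full cochain simultaneously}. Unlike the classical local index theorem, where one rescales a single supertrace $\Str(e^{-\mathsf{D}^2})$, here one must carry the rescaling through products of an arbitrary number $M$ of heat semigroups interleaved with the operators $F_u[\vartheta_{s_{a-1}+1},\dots,\vartheta_{s_a}]$, integrated over $\Delta_M$, and summed over all partitions. Proving uniform (in $u$, and with summable dependence on $N$) off-diagonal heat-kernel estimates that justify both the convergence of the transgression integral $\int_1^\infty\psi_s\,ds$ as a Chen-normalized entire cochain and the interchange of the limit with the simplex integration and the supertrace is precisely the ``souped-up'' version of Getzler rescaling referred to in the statement; once these estimates are in place, the identification of the limit with $\mu_0$, including the numerical constants, is a careful but essentially computational application of Mehler's formula.
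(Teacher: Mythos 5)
Your overall plan — introduce a one-parameter rescaling of $\mathsf{D}$ in formula \eqref{ChernCharacter}, prove a Duhamel/transgression formula exhibiting the $u$-derivative as a coboundary of a Chen-normalized cochain, and identify the limiting cochain with $\mu_0$ via Getzler rescaling and Mehler's formula — is indeed the approach behind this theorem (the survey itself does not prove it; it cites \cite[Thm.~9.1]{GueneysuLudewig}, whose proof is exactly such a transgression-plus-rescaling argument, and your list of analytic difficulties, uniformity over the number $N$ of insertions, over the simplices, and summability in the entire sense, is the right list). However, there is a genuine error that breaks your step (b): you run the rescaling in the wrong direction. With $H_u = u^2\mathsf{D}^2/2$ and $u \to \infty$, the operators $e^{-\tau u^2 \mathsf{D}^2/2}$ do not localize; since $\mathsf{D}$ has discrete spectrum, they converge (for $\tau > 0$, in trace norm) to the finite-rank spectral projection $P$ onto $\ker \mathsf{D}$. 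The limit of your family is therefore the ``harmonic'' cochain built from $P$; for instance for $N=1$ one gets, up to normalization, $\Str\bigl(\cc(\vartheta'')P\bigr)$, which is cohomologous to, but in general not equal to, $(2\pi)^{-n/2}\int_X \hat{A}(X)\wedge \vartheta''$. Concentration onto the $\hat{A}$-form is a \emph{short-time} phenomenon: the correct family is $\mathrm{Ch}_{\varepsilon\mathsf{D}}$ with $\varepsilon \in (0,1]$ and the limit taken as $\varepsilon \to 0$ (equivalently, blowing the metric \emph{up}; your parenthetical ``equivalently, rescaling the metric'' points the wrong way, since $\mathsf{D} \mapsto u\mathsf{D}$ corresponds to shrinking the manifold). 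All the tools you invoke — coupled rescaling of coordinates, time and Clifford variables, degeneration of the Clifford algebra to the exterior algebra, Lichnerowicz plus Mehler — are small-parameter tools and simply do not apply at $u = \infty$.

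Concretely, the repaired argument reads: $\mathrm{Ch}_{\mathsf{D}} - \mathrm{Ch}_{\varepsilon\mathsf{D}} = \delta\int_\varepsilon^1 \psi_s\, \dd s$, where the hard analysis is now (i) integrability of the transgression cochain $\psi_s$ near $s = 0$, uniformly in the sense required for Chen-normalized entire cochains, and (ii) the identification $\lim_{\varepsilon\to 0}\mathrm{Ch}_{\varepsilon\mathsf{D}} = \mu_0$ by Getzler rescaling, which is where the insertions $[\varepsilon\mathsf{D}, \cc(\vartheta')] - \cc(d\vartheta')$ are shown to be of subleading Getzler order while $\cc(\vartheta'')$ survives as exterior multiplication. (A minor additional looseness: the transgression cochain produced by Duhamel is not literally ``insertion of $\dot H_s$''; one must algebraically rewrite $\dot H_s$ as a supercommutator with the odd operator $\dot{\mathsf{D}}_s$ and redistribute it, and it is this rewriting that produces the coboundary structure and preserves Chen normalization.) With the direction of the limit reversed and these points attended to, your outline coincides with the proof in \cite{GueneysuLudewig}.
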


Remember here that we say that a cochain is Chen normalized if it vanishes on the kernel of $\rho$. Since $\hat{A}(X)$ is a closed differential form, this implies \eqref{ChClosedness}, by the usual Stokes theorem. As we discuss below, \eqref{DefinitionCh0} essentially implies the localization formula \eqref{LocalizationFormula} for suitable differential forms.

\medskip

\noindent \textbf{Comparison to the previous definition.}
Inspecting formula \eqref{IteratedIntegralMap}, we see that
\begin{equation*}
  \rho(\vartheta_1, \dots, \vartheta_N) = \int_{\Delta_N} \vartheta^{\prime\prime}_1(\tau_1) \wedge \cdots \wedge \vartheta_N^{\prime\prime}(\tau_N) \dd \tau,
\end{equation*}
whenever $\vartheta_a^\prime = 0$ for each $a$. If each $\vartheta^{\prime\prime}_a$ has degree one, we have
\begin{equation*}
  \sum_{\sigma \in S_N} \sgn(\sigma) \rho(\vartheta_{\sigma_1}, \dots, \vartheta_{\sigma_N}) = \overline{\vartheta}^{\prime\prime}_1 \wedge \cdots \wedge \overline{\vartheta}^{\prime\prime}_N,
\end{equation*}
 which is contained in $\mathscr{D}$, hence has a well-defined path integral, as defined in \eqref{DefinitionI}. On the other hand, an inspection of the formula \eqref{ChernCharacter} yields
\begin{equation*}
\begin{aligned}
\mathrm{Ch}_{\mathsf{D}}[\vartheta_1, \dots, \vartheta_N] &= 2^{-N/2} \int_{\Delta_N} \mathrm{Str}\Bigl(e^{-\tau_1 H} \prod_{a=1}^N\cc(\vartheta_a^{\prime\prime}) e^{-(\tau_a - \tau_{a-1})H} \Bigr)\dd \tau.
  \end{aligned}
\end{equation*}
After anti-symmetrization, this coincides with the formula for $I[\overline{\vartheta}^{\prime\prime}_1 \wedge \cdots \wedge \overline{\vartheta}^{\prime\prime}_N]$, as calculated in  \eqref{IntegralMapCylinderForms}. 
In this sense, the two versions of the integral map agree, and we from now on, we will use the notation $I$ instead of $I^\prime$.

\section{Bismut-Chern Characters, Entire Chains and the Localization Formula}

As usual, throughout this section $X$ denotes a Riemannian manifold, which is assumed to be compact and spin for all statements related to the path integral map.

\medskip

\noindent \textbf{Periodic cyclic cohomology.}
Throughout, a general differential form on the loop space is the direct {\em sum} of its homogeneous components, in other words, we denote
\begin{equation*}
  \Omega(\L X) \stackrel{\mathrm{def}}{=} \bigoplus_{\ell=0}^\infty \Omega^\ell(\L X).
\end{equation*}
It is well-known however \cite{MR1010411}, that in the equivariant cohomology of the loop space, it is important to allow differential forms that are an infinite sum of its homogeneous components, in other words, elements of the direct {\em product} of the $\Omega^\ell(\L X)$. This gives the {\em periodic equivariant cohomology} $h_\T(\L X)$ of the loop space, which is the cohomology of the $\Z_2$-graded complex\footnote{It customary in this context to introduce a formal variable of degree $2$ and its inverse in order to define the periodic cyclic cohomology. The effect is that the complex and its cohomology are $\Z$-graded, but $2$-periodic. Here we reduce modulo $2$ right away.} $\widehat{\Omega}(\L X)^\T = \widehat{\Omega}^+(\L X)^\T \oplus \widehat{\Omega}^-(\L X)^\T$, where
\begin{equation*}
  \widehat{\Omega}^{+}(\L X) \stackrel{\mathrm{def}}{=} \prod_{\ell=0}^\infty \Omega^{2\ell}(\L X), \qquad \widehat{\Omega}^{-}(\L X) \stackrel{\mathrm{def}}{=} \prod_{\ell=0}^\infty \Omega^{2\ell+1}(\L X).
\end{equation*}
The corresponding differential is the equivariant differential $d_K$, c.f.\ \eqref{EquivariantDifferential}, which exchanges the even and the odd part.

\medskip

\noindent \textbf{Bismut-Chern-characters.} Maybe the most prominent example of such a differential form are the Bismut-Chern-characters, defined as follows.

\begin{definition}
Let $E$ be a Hermitean vector bundle with connection $\nabla$ over the manifold $X$. The {\em Bismut-Chern-character} associated to this data is the equivariantly closed differential form $\mathrm{Ch}(E, \nabla) \in \widehat{\Omega}^+(\L X)$ given by the formula
\begin{equation*}
  \mathrm{Ch}(E, \nabla)_\gamma = \sum_{N=0}^\infty (-1)^N \int_{\Delta_N} \tr_E\left( [\gamma\|_{\tau_N}^1]^E \prod_{a=1}^N R(\tau_a)_\gamma [\gamma\|_{\tau_{a-1}}^{\tau_a}]^E\right) \dd \tau
\end{equation*}
at $\gamma \in \L X$, where $R$ is the curvature of the connection $\nabla$. 
\end{definition}

Explicitly, the degree $2N$-component $\mathrm{Ch}_N$ of $\mathrm{Ch}(E, \nabla)$ is given by
\begin{equation*}
  \mathrm{Ch}_N[v_{2N}, \dots, v_{1}] = 2^{-N}\sum_{\sigma \in S_{2N}}\int_{\Delta_N} \tr_E \left([\gamma\|_{\tau_N}^1]^E \prod_{a=1}^N R\bigl(v_{\sigma_{2a}}(\tau_a), v_{\sigma_{2a-1}}(\tau_a)\bigr) [\gamma\|_{\tau_{a-1}}^{\tau_a}]^E\right) \dd \tau.
\end{equation*}
The main properties of the Bismut-Chern-character is that it is equivariantly closed, $d_K \mathrm{Ch}(E, \nabla) = 0$, in other words, $d\mathrm{Ch}_N = \iota_K \mathrm{Ch}_{N+1}$, and that its pullback along the inclusion $i: X \rightarrow \L X$ is the ordinary Chern character of $(E, \nabla)$, defined using Chern Weyl-theory:
\begin{equation} \label{PullbackBismutChern}
 i^* \mathrm{Ch}(E, \nabla) =  \ch(E, \nabla).
\end{equation} 
Formally, the following theorem has been observed by Bismut \cite{MR786574} and was his original motivation for the definition of these differential forms.
Of course, by the usual argument of McKean-Singer \cite[Thm.~3.50]{MR2273508}, the right hand side of \eqref{PathIntegralBismutChern} below equals $\mathrm{ind}(\mathsf{D}_E)$, the graded index of the twisted Dirac operator $\mathsf{D}_E$. 

\begin{theorem} \label{TheoremBismutChern}
  We have the formula
 \begin{equation} \label{PathIntegralBismutChern}
   I\bigl[\mathrm{Ch}(E, \nabla)\bigr] = \Str(e^{-\mathsf{D}_E^2/2}).
 \end{equation}
\end{theorem}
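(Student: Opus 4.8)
The plan is to reduce the statement to the previously established combinatorial formula \eqref{IntegralMapCylinderForms} together with the identification of the Chern character $\mathrm{Ch}_{\mathsf{D}}$ with $\mu_0$ from Theorem~\ref{ThmChCohomologous}. Since the Bismut--Chern character is an equivariantly closed element of the periodic completion $\widehat\Omega^+(\L X)$, the first task is to exhibit it as lying in the image of the cyclic iterated integral map $\rho^\natural$, so that the path integral map $I = I'$ applies to it. Concretely, I would write $\mathrm{Ch}(E,\nabla)$ as $\rho^\natural$ applied to an explicit (entire) cyclic bar chain built from the curvature form $R$ and the connection: comparing the defining formula for $\mathrm{Ch}(E,\nabla)_\gamma$ with the iterated integral formula \eqref{IteratedIntegralMap}, one recognizes the parallel transports $[\gamma\|_{\tau_{a-1}}^{\tau_a}]^E$ as coming from exponentiating the connection one-form inside $\Omega_\T(X)$, while the insertions $R(\tau_a)_\gamma$ match the $\vartheta^{\prime\prime}$-type slots. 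This is where the passage from ordinary chains in $\mathscr D$ to \emph{entire} cyclic chains is essential, since $\mathrm{Ch}(E,\nabla)$ is an infinite sum of homogeneous components.

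Once $\mathrm{Ch}(E,\nabla) = \rho^\natural(c)$ for an explicit cyclic chain $c$ encoding $(E,\nabla)$, the definition of $I'$ gives $I[\mathrm{Ch}(E,\nabla)] = \mathrm{Ch}_{\mathsf{D}}[c]$. I would then feed the explicit chain into the operator-theoretic formula \eqref{ChernCharacter} for $\mathrm{Ch}_{\mathsf{D}}$. Because the bar-chain entries for $\mathrm{Ch}(E,\nabla)$ have their $\vartheta^\prime$-components governed by the connection and the $F$-functional only accepts one or two inputs with the structure given, the partition sum in \eqref{ChernCharacter} should collapse: the relevant contributions assemble the twisted heat semigroup of the \emph{twisted} Dirac operator $\mathsf{D}_E$ out of the untwisted $H=\mathsf{D}^2/2$ together with the Clifford-multiplied curvature insertions. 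The key mechanism is a Duhamel/Volterra series: the time-ordered integral over $\Delta_N$ of products $e^{-\tau_1 H}\,\mathsf F\,e^{-(\tau_a-\tau_{a-1})H}\cdots$ is precisely the perturbation series expressing $\Str\bigl(e^{-H_E}\bigr)$, where $H_E = \mathsf{D}_E^2/2$, in terms of the unperturbed semigroup and the perturbation coming from the curvature of $E$ (this is the Feynman--Kac/Duhamel identity underlying \eqref{IntegralMapCylinderForms}, now carried out with an auxiliary bundle). Summing over $N$ reconstitutes the full exponential and yields $\Str(e^{-\mathsf{D}_E^2/2})$.

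Alternatively — and this is the conceptually cleaner route I would present in parallel — one can avoid the direct Duhamel resummation by invoking Theorem~\ref{ThmChCohomologous}: since $\mathrm{Ch}_{\mathsf{D}}$ is cohomologous to $\mu_0$ as a Chen normalized cochain, and since $\mathrm{Ch}(E,\nabla)$ is $d_K$-closed (equivalently $c$ is a cycle for $d_\T + b'$), the value $\mathrm{Ch}_{\mathsf{D}}[c]$ equals $\mu_0[c]$. Evaluating $\mu_0$ via \eqref{DefinitionCh0} on the chain representing $\mathrm{Ch}(E,\nabla)$ collapses the iterated integral over $\Delta_N$ (the connection terms integrate out), and using $i^*\mathrm{Ch}(E,\nabla) = \ch(E,\nabla)$ from \eqref{PullbackBismutChern} produces $(2\pi)^{-n/2}\int_X \hat A(X)\wedge \ch(E,\nabla)$. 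By the Atiyah--Singer index theorem — or directly by McKean--Singer as noted before the statement — this Chern--Weil integral equals $\mathrm{ind}(\mathsf D_E) = \Str(e^{-\mathsf{D}_E^2/2})$, giving the claim.

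The main obstacle I anticipate is the first step: rigorously exhibiting $\mathrm{Ch}(E,\nabla)$ as $\rho^\natural$ of an explicit entire cyclic chain, and controlling convergence in the entire (rather than merely algebraic) completion of the bar complex. The combinatorics of matching the symmetrization factors $2^{-N}$ and the alternating signs $(-1)^N$ in $\mathrm{Ch}(E,\nabla)$ against the normalization $2^{-n_N/2}$ and the partition sum in \eqref{ChernCharacter} is delicate, and one must verify that the chain genuinely lands in $\B^\natural$ (graded cyclic invariance) so that $I'$ is defined and Chen normalization applies. If one takes the resummation route rather than the cohomological one, the additional difficulty is justifying the Duhamel series termwise and interchanging the infinite sum over $N$ with the supertrace, which requires trace-class estimates on the perturbed heat semigroup that are uniform enough to sum — precisely the analytic content that the entire-chain framework of \cite{GueneysuLudewig} is designed to supply.
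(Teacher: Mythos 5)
Your first route is essentially the paper's proof. The paper cites the results of Getzler--Jones--Petrack to write each homogeneous component $\mathrm{Ch}_N$ as $\rho(c_N)$ for explicit bar chains $c_N \in \B^{2N}(\Omega_\T(X))$, observes that the resulting chain $c = \sum_N c_N$ is entire so that the continuous extension of $\mathrm{Ch}_{\mathsf{D}}$ to $\B_\epsilon(\Omega_\T(X))$ applies, and then evaluates $\mathrm{Ch}_{\mathsf{D}}[c]$ by exactly the Duhamel/perturbation-series mechanism you describe (this is Prop.~8.2 of \cite{GueneysuLudewig}), reassembling the twisted heat semigroup $e^{-\mathsf{D}_E^2/2}$ from the untwisted semigroup together with the curvature insertions. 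Your anticipated obstacles (cyclic invariance, entireness, interchanging the sum over $N$ with the supertrace) are precisely the points the paper delegates to \cite[\S 6]{GJP} and to the entire-chain results of \cite{GueneysuLudewig}.

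Your ``conceptually cleaner'' alternative route, however, should be dropped, or at least flagged as unusable in context: it is circular relative to the role this theorem plays in the paper. The point of Thm.~\ref{TheoremBismutChern} is that, combined with the localization formula \eqref{LocalizationFormula2} (which is what Thm.~\ref{ThmChCohomologous} actually buys), it \emph{proves} the Atiyah--Singer index theorem: one evaluation of $I[\mathrm{Ch}(E,\nabla)]$ gives $\Str(e^{-\mathsf{D}_E^2/2})$, the other gives $(2\pi)^{-n/2}\int_X \hat{A}(X)\wedge\ch(E,\nabla)$, and equating the two yields the index theorem via McKean--Singer. Your alternative invokes Atiyah--Singer to pass from the Chern--Weil integral back to the supertrace, so it assumes exactly the statement that the two independent evaluations are meant to establish; with that route, the paper's concluding derivation of the index theorem from \eqref{PathIntegralBismutChern} and \eqref{OtherEvaluation} would become vacuous. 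The direct operator-theoretic computation is therefore not just an option but the required argument here.
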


In \eqref{PathIntegralBismutChern}, we take $I$ to be the path integral map constructed in Section~\ref{SectionSecondConstruction}.
This makes sense as by the results of \cite[\S 6]{GJP}, $\mathrm{Ch}(E, \nabla)$ can be written as an iterated integral, i.e.\ there exists elements $c_N \in \B^{2N}(\Omega_\T(X))$, $N=0, 1, 2, \dots$ such that $\rho(c_N) = \mathrm{Ch}_N$. In particular, each $\mathrm{Ch}_N$ is contained in the domain of the integral map $\rho_*^\natural\mathrm{Ch}_{\mathsf{D}}$ constructed in Section~\ref{SectionSecondConstruction}. The identity \eqref{PathIntegralBismutChern} is then proven using Prop.~8.2 of \cite{GueneysuLudewig}. 

We remark that $\mathrm{Ch}$ does {\em not} directly lie in the domain of the integral map $I$ defined in Section~\ref{SectionFirstConstruction}; in fact, $\mathrm{Ch}$ is not even a smooth differential form on $\L_c X$, due to the presence of the parallel transport in its definition. However, interpreting the parallel transport in the stochastic sense, one obtains a differential form on $\L_c X$ with measurable coefficients. The top degree map can be applied to this measurable differential form, which yields a measurable function on $\L_c X$. One can then compute $I[\mathrm{Ch}(E, \nabla)]$ by employing a suitable version of the Feynman-Kac-formula, which gives the same result.

\medskip

\noindent \textbf{Entire cohomology.} In the discussion of Thm.~\ref{TheoremBismutChern}, we have so far omitted the fact that the Bismut-Chern-characters are not contained in $\Omega(\L X)$, but only in the extension $\widehat{\Omega}(\L X)$ that allows infinite sums of homogeneous forms. In particular, it is not at all clear {\em a priori} that $I[\mathrm{Ch}(E, \nabla)]$, defined as the sum of the individual integrals $I[\mathrm{Ch}_N]$ makes any sense. This issue is best discussed in our second approach to the integral map, where it is related to the entire cohomology of Connes.

For a differential graded algebra $\Omega$, we denote by $\widehat{\B}(\Omega)$ the complex defined by the same formula \eqref{BarComplex} as $\B(\Omega)$, but with a direct product replacing the direct sum. In other words, its elements are arbitrary sums $\sum_{N=0}^\infty \theta^{(N)}$, with $\theta^{(N)} \in \Omega[1]^{\otimes N}$, without any convergence requirement. The {\em entire bar complex} $\B_\epsilon(\Omega)$ is then a certain subcomplex of $\widehat{\B}(\Omega)$, containing chains that satisfy a certain growth condition; for details, we refer to \cite{GueneysuLudewig}. One can then show that for any Bismut-Chern-character $\mathrm{Ch}(E, \nabla)$, the chain $c = \sum_{N=0}^\infty c_N \in \widehat{B}(\Omega_\T(X))$ such that $\mathrm{Ch}(E, \nabla) = \rho(c)$, constructed by Getzler-Jones-Petrack, is entire. Dually, the following result is shown in \cite[Thms~4.1, 5.2]{GueneysuLudewig}:

\begin{theorem}
The Chern character $\mathrm{Ch}_{\mathsf{D}}$ has a continuous extension to $\B_\epsilon(\Omega)$.
\end{theorem}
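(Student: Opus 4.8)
The statement asserts that the Chern character $\mathrm{Ch}_{\mathsf{D}}$, defined so far on the algebraic bar complex $\B(\Omega_\T(X))$ by the operator-theoretic formula \eqref{ChernCharacter}, is bounded for the seminorms defining the entire bar complex $\B_\epsilon(\Omega_\T(X))$, and hence extends continuously from the dense subspace of finite chains. The topology on $\B_\epsilon$ penalises the growth of the homogeneous norms $\|\vartheta^{(N)}\|$ in $N$ only mildly (roughly, like $N!$), so a functional extends continuously precisely when it obeys an estimate of the shape
\[
  \bigl|\mathrm{Ch}_{\mathsf{D}}[\vartheta_1, \dots, \vartheta_N]\bigr| \;\le\; \frac{C\,K^N}{\lceil N/2\rceil!}\,\prod_{a=1}^N \|\vartheta_a\|,
\]
with a fixed submultiplicative seminorm $\|\cdot\|$ on $\Omega_\T(X)$ and constants $C,K$ independent of $N$. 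The whole plan is to extract such a factorially decaying bound directly from \eqref{ChernCharacter}.

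The analytic heart is a uniform trace-norm estimate for the operator products under the supertrace. Since $X$ is compact, $e^{-H}$ with $H=\mathsf{D}^2/2$ is trace class; set $Z := \tr\, e^{-H} < \infty$. Write the $M+1$ heat exponents as $t_0, \dots, t_M \ge 0$; by construction these are the gaps of the subdivision $0 \le \tau_1 \le \dots \le \tau_M \le 1$ closed up around the loop, so that $\sum_i t_i = 1$. For the positive operator $e^{-tH}$ and Schatten exponent $p = 1/t$ one has the \emph{exact} identity $\|e^{-tH}\|_{1/t} = (\tr\, e^{-H})^{t} = Z^{t}$, uniform in $t \in [0,1]$ (and $=1$ at $t=0$, where $p=\infty$ gives the operator norm). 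Applying Hölder's inequality for Schatten norms, with exponents $p_i = 1/t_i$ on the heat factors and $p=\infty$ on the bounded insertions $F[\dots]$, and using $\sum_i 1/p_i = \sum_i t_i = 1$, gives
\[
  \Bigl\| e^{-t_0 H}\,F[\cdots]\,e^{-t_1 H}\cdots F[\cdots]\,e^{-t_M H}\Bigr\|_1 \;\le\; Z\,\prod_{a=1}^M \bigl\|F[\vartheta_{s_{a-1}+1},\dots,\vartheta_{s_a}]\bigr\|,
\]
with $Z$ independent of the times and of $M$. The virtue of the choice $p_i = 1/t_i$ is that the subsets of $\Delta_M$ where consecutive insertion times coincide --- where individual heat factors cease to be trace class --- are absorbed painlessly: there $p_i \to \infty$ and the offending factor reduces to the operator norm $1$.

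It remains to assemble the estimate. Each operator $F[\vartheta]$ is bounded, with $\|F[\vartheta]\| \lesssim \|\vartheta\|_{C^1}$: Clifford multiplication is a zeroth-order, hence bounded, operator, and the only potentially unbounded term $[\mathsf{D}, \cc(\vartheta')]$ is again of order zero, bounded in terms of one derivative of $\vartheta'$; the same holds for the two-form values $F[\vartheta_1, \vartheta_2]$. Bounding $|\Str(\cdot)|$ by the trace norm, integrating over $\Delta_M$ (of volume $1/M!$), and summing over the partitions $s \in \mathscr{P}_N$, one uses that $F$ vanishes on blocks of size $\ge 3$: only partitions into blocks of size one and two contribute, there are at most $2^N$ of them, and each has $M \ge \lceil N/2\rceil$ blocks, so $1/M! \le 1/\lceil N/2\rceil!$. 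Collecting the prefactor $2^{-n_N/2}$ (bounded by $C^N$, as the degrees are at most $\dim X + 1$) together with the per-block $C^1$-bounds into a single constant $K^N$ yields exactly the displayed factorially decaying estimate, whence continuity. I expect the main obstacle to lie not in the clean Hölder estimate above but in the bookkeeping at the two ends: pinning down the operator bound on $F$ with the correct (de Rham plus one derivative) seminorm, and then verifying that the factorial gain $1/\lceil N/2\rceil!$ is genuinely summable against the growth permitted by the precise definition of the entire norm on $\B_\epsilon$ recalled in \cite{GueneysuLudewig}.
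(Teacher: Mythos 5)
Your proposal is correct and takes essentially the same route as the proof in \cite{GueneysuLudewig} (Thms.~4.1, 5.2), to which this survey defers: the Getzler--Szenes/JLO-type H\"older inequality for Schatten norms distributed over the heat factors (using $\|e^{-tH}\|_{1/t}=(\tr e^{-H})^t$ and $\sum_i t_i = 1$), operator-norm bounds on the insertions $F[\cdot]$ and $F[\cdot,\cdot]$ in terms of $C^1$-type seminorms, and the count of interval partitions into blocks of size at most two, giving the decay $C K^N/\lceil N/2\rceil!\,\prod_a\|\vartheta_a\|$ that is precisely dual to the entire growth condition defining $\B_\epsilon(\Omega_\T(X))$. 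In particular you correctly identified the key point that the two-element blocks force $M\geq\lceil N/2\rceil$ rather than $M=N$, so the factorial gain is only $1/\lceil N/2\rceil!$, which is exactly why the entire bar complex (and not a smaller domain with stronger decay requirements) is the natural domain of the extension.
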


Together with the discussion before, this gives an a priori reason why the left hand side of \eqref{PathIntegralBismutChern} is well-defined.

\medskip

\noindent \textbf{The localization formula and the index theorem.} 
We now explain how to rigorously conduct the proof of the Atiyah-Singer index theorem envisioned by Atiyah \cite{MR816738} and Bismut \cite{MR786574} using our results. The first result is the following localization formula. We say that $\theta \in \widehat{\Omega}(\L X)$ is an entire iterated integral, if there exists $c \in \B_\epsilon^\natural(\Omega_\T(X))$ such that $\theta = \rho^\natural(c)$.

\begin{theorem}
Let $\theta \in \widehat{\Omega}(\L X)$ be equivariantly closed, i.e.\ $d_K \theta = 0$, and assume that it is an entire iterated integral. Then
\begin{equation} \label{LocalizationFormula2}
  I[\theta] = (2\pi)^{-n/2}\int_X \hat{A}(X) \wedge i^*\theta.
\end{equation}
\end{theorem}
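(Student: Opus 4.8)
The plan is to transport the entire statement from the loop space to the bar complex and then invoke the main structural result about $\mathrm{Ch}_{\mathsf{D}}$, namely Theorem~\ref{ThmChCohomologous}. First I would fix an entire cyclic chain $c \in \B_\epsilon^\natural(\Omega_\T(X))$ with $\rho^\natural(c) = \theta$, which exists by the hypothesis that $\theta$ is an entire iterated integral, and recall that by the construction of the path integral map in Section~\ref{SectionSecondConstruction}, together with the continuous extension of $\mathrm{Ch}_{\mathsf{D}}$ to the entire complex, one has $I[\theta] = \mathrm{Ch}_{\mathsf{D}}[c]$. Thus the problem becomes the purely algebraic task of evaluating $\mathrm{Ch}_{\mathsf{D}}$ on $c$.

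The key observation is that equivariant closedness of $\theta$ translates, via the chain map property of $\rho^\natural$, into a statement living in $\ker\rho^\natural$. Indeed, since $\rho^\natural$ intertwines the total differential $d_\T + b^\prime$ with $d_K$, we obtain $\rho^\natural\bigl((d_\T + b^\prime)c\bigr) = d_K \rho^\natural(c) = d_K\theta = 0$, so that $(d_\T + b^\prime)c \in \ker\rho^\natural$. Now I would write, using Theorem~\ref{ThmChCohomologous}, $\mathrm{Ch}_{\mathsf{D}} = \mu_0 + \delta\psi$ for a Chen normalized cochain $\psi$. Evaluating on $c$ and using the definition \eqref{Codifferential} of the codifferential gives
$$ I[\theta] = \mathrm{Ch}_{\mathsf{D}}[c] = \mu_0[c] - \psi\bigl[(d_\T + b^\prime)c\bigr]. $$
Because $\psi$ is Chen normalized it vanishes on $\ker\rho^\natural$, so the second term drops out and $I[\theta] = \mu_0[c]$.

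It remains to identify $\mu_0[c]$ with the right hand side of \eqref{LocalizationFormula2}, which I would do by pulling the iterated integral back to the constant loops $i(X)$. Since the velocity field $K$ vanishes identically on $i(X)$, every insertion $\iota_K$ in \eqref{IteratedIntegralMap} dies, while $i^*\vartheta^{\prime\prime}(\tau) = \vartheta^{\prime\prime}$ independently of $\tau$. Hence $i^*\rho(\vartheta_1, \dots, \vartheta_N) = \bigl(\int_{\Delta_N}\dd\tau\bigr)\,\vartheta_1^{\prime\prime}\wedge\cdots\wedge\vartheta_N^{\prime\prime} = \tfrac{1}{N!}\,\vartheta_1^{\prime\prime}\wedge\cdots\wedge\vartheta_N^{\prime\prime}$, and comparing with \eqref{DefinitionCh0} shows $\mu_0 = (2\pi)^{-n/2}\int_X \hat{A}(X)\wedge i^*\rho(\,\cdot\,)$ as cochains. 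Applying this to $c$ and using $\rho^\natural(c) = \theta$ yields $\mu_0[c] = (2\pi)^{-n/2}\int_X \hat{A}(X)\wedge i^*\theta$, closing the chain of identities.

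The step I expect to be the genuine obstacle is the passage to the entire setting. Theorem~\ref{ThmChCohomologous} is a statement about Chen normalized cochains on the algebraic complex $\B^\natural(\Omega_\T(X))$, whereas $\theta$ and its primitive $c = \sum_N c_N$ live only in $\B_\epsilon^\natural(\Omega_\T(X))$; the argument above manipulates this infinite sum term by term and evaluates the \emph{a priori} unbounded cochain $\psi$ on it. To make this rigorous one must verify that the chain homotopy realizing $\mathrm{Ch}_{\mathsf{D}} \sim \mu_0$ can be chosen continuous on the entire complex, so that all three cochains $\mathrm{Ch}_{\mathsf{D}}$, $\mu_0$ and $\psi$ extend continuously and the identity $\mathrm{Ch}_{\mathsf{D}}[c] = \mu_0[c] - \psi[(d_\T + b^\prime)c]$ survives the limit over $N$. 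Granting the entire growth estimates already established for $\mathrm{Ch}_{\mathsf{D}}$, the remaining manipulations are formal.
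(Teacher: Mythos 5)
Your proof follows essentially the same route as the paper's own: fix $c \in \B_\epsilon^\natural(\Omega_\T(X))$ with $\rho^\natural(c) = \theta$, invoke Theorem~\ref{ThmChCohomologous} to write $\mathrm{Ch}_{\mathsf{D}} - \mu_0 = \delta\psi$ with $\psi$ Chen normalized, kill the homotopy term using the chain map property of $\rho^\natural$ together with $d_K\theta = 0$ and Chen normalization, and identify $\mu_0[c]$ with the localized integral. Your explicit pullback computation showing $\mu_0[c] = (2\pi)^{-n/2}\int_X \hat{A}(X)\wedge i^*\theta$ (which the paper asserts directly from the definition of $\rho$) and your flagging of the continuity of the chain homotopy on the entire complex are details the paper leaves implicit, not a different argument.
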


\begin{proof}
By the assumption on $\theta$, there exists $c \in \B_\epsilon^\natural(\Omega_\T(X))$ with $\rho^\natural(c) = \theta$. 
Define $I_0 : \widehat{\Omega}(\L X) \rightarrow \R$ by setting $I_0[\theta]$ to be the right hand side of \eqref{LocalizationFormula2} and notice that by the definition \eqref{IteratedIntegralMap} of the iterated integral map, we have
\begin{equation*}
  I_0[\theta] = \mu_0(X)[c],
\end{equation*}
where  $\mu_0(X)$ is defined in \eqref{DefinitionCh0}.
Now by Thm.~\ref{ThmChCohomologous}, there exists a Chen normalized cochain $\mu^\prime$ such that $\mathrm{Ch}_{\mathsf{D}} - \mu_0(X) = \delta \mu^\prime$. Therefore,
\begin{equation*}
\begin{aligned}
  I[\theta] - I_0[\theta] = (I-I_0)[\rho^\natural(c)] &=\bigl(\mathrm{Ch}_{\mathsf{D}} - \mu_0(X)\bigr)[c] \\
  &=\delta \mu^\prime [c] = - \mu^\prime\bigl[(d_\T + b^\prime)c\bigr].
\end{aligned}
\end{equation*}
Since $\rho^\natural$ is a chain map and $\theta$ is equivariantly closed, the calculation
\begin{equation*}
  \rho^\natural\bigl((d_\T+b^\prime)c\bigr) = d_K \rho^\natural(c) = d_K \theta = 0
\end{equation*}
shows that $(d_\T+b^\prime)c \in \ker(\rho^\natural)$, hence $\mu[(d_\T + b^\prime)c]= 0$, as $\mu$ is Chen normalized.
\end{proof}

The localization formula \eqref{LocalizationFormula2} is an infinite-dimensional version of the localization formula of equivariant cohomology in finite dimensions, c.f.\ \cite{BerlineVergne, DuistermaatHeckmann}. Applying it to a Bismut-Chern character (which is both equivariantly closed and can be represented as an entire iterated integral, as discussed above), we get
\begin{equation} \label{OtherEvaluation}
  I\bigl[\mathrm{Ch}(E, \nabla)\bigr] = (2\pi)^{-n/2}\int_X \hat{A}(X) \wedge i^*\mathrm{Ch}(E, \nabla) = (2\pi)^{-n/2} \int_X \hat{A}(X) \wedge \mathrm{ch}(E, \nabla),
\end{equation}
where in the last step, we used \eqref{PullbackBismutChern}. Together with our previous formula \eqref{PathIntegralBismutChern} and the McKean-Singer formula, this proves the Atiyah-Singer index theorem.

\medskip

\noindent \textbf{Odd dimensions.} We remark that nowhere in the above, it was necessary to restrict to even-dimensional manifolds. Of course, in odd dimensions, both \eqref{OtherEvaluation} and \eqref{PathIntegralBismutChern} are zero; for $I[\mathrm{Ch}(E, \nabla)]$, this is true because the path integral is an odd functional in this case, in other words, it evaluates as zero on even-dimensional forms. To obtain a non-trivial result in this case, one uses the {\em odd Bismut-Chern character} $\mathrm{Ch}(g)$ of Wilson \cite{Wilson}, an equivariantly closed, odd element of $\widehat{\Omega}(\L X)$ associated to a map $g: X \rightarrow \mathrm{U}(k)$, the unitary group of order $k$, for some $k$. This can be represented by an entire iterated integral following the work of Cacciatori-G\"uneysu \cite{CacciatoriGueneysu}. A result similar to Thm.~\ref{TheoremBismutChern} connects this to the spectral flow to the family $\mathsf{D}_s = \mathsf{D} + s \cc(g^{-1}dg)$ of Dirac operators on $\Sigma \otimes \C^k$. This recovers the odd index theorem of Getzler \cite{MR1231957}.

\bibliography{LiteraturMatrixReport}

\end{document}